\newtheorem{thm}{Theorem}[section]
\newtheorem{corollary}[thm]{Corollary}
\newtheorem{lemma}[thm]{Lemma}
\theoremstyle{definition}
\newtheorem{definition}[thm]{Definition}
\newtheorem{example}[thm]{Example}
\theoremstyle{remark}
\newtheorem{remark}[thm]{Remark}
\newtheorem{claim}{Claim}
\DeclareMathOperator{\qf}{qf}
\DeclareMathOperator{\Max}{Max}
\DeclareMathOperator{\Cl}{Cl}
\DeclareMathOperator{\Pic}{Pic}
\numberwithin{equation}{section}
\newcommand{\field}[1]{\mathbb{#1}}
\newcommand{\R}{\field{R}}
\begin{document}
%%%%%%%%%%%%%%%%%%%%%%%%%%%%%%%%%%%%%%%%%%%%%%%%%%%%%%%%%
%%%%%%%%%%%%%%%%%%%%%%%%%%%%%%%%%%%%%%%%%%%%%%%%%%%%%%%%%
\title[Integral domains with Boolean $t$-Class semigroup]{Integral domains with Boolean $t$-Class semigroup}

%%%%%%%%%%%%%%%%%%%%%%%%%%%%%%%%%%%%%%%%%%%%%%%%%%%%%%%%%
%%%%%%%%%%%%%%%%%%%%%%%%%%%%%%%%%%%%%%%%%%%%%%%%%%%%%%%%%
\author{S. Kabbaj}

\address{Department of Mathematics and Statistics,
King Fahd University of Petroleum \& Minerals, P. O. Box 5046,
Dhahran 31261, Saudi Arabia}

\email{kabbaj@kfupm.edu.sa}

%%%%%%%%%%%%%%%%%%%%%%%%%%%%%%%%%%%%%%%%%%%%%%%%%%%%%%%%%
%%%%%%%%%%%%%%%%%%%%%%%%%%%%%%%%%%%%%%%%%%%%%%%%%%%%%%%%%
\author{A. Mimouni}

\address{Department of Mathematics and Statistics,
King Fahd University of Petroleum \& Minerals, P. O. Box 5046,
Dhahran 31261, Saudi Arabia}

\email{amimouni@kfupm.edu.sa}

%%%%%%%%%%%%%%%%%%%%%%%%%%%%%%%%%%%%%%%%%%%%%%%%%%%%%%%%%
%%%%%%%%%%%%%%%%%%%%%%%%%%%%%%%%%%%%%%%%%%%%%%%%%%%%%%%%%
\thanks{This work was funded by King Fahd University of Petroleum \& Minerals under Project \# MS/t-Class/257.}

\date{\today}

%%%%%%%%%%%%%%%%%%%%%%%%%%%%%%%%%%%%%%%%%%%%%%%%%%%%%%%%%
%%%%%%%%%%%%%%%%%%%%%%%%%%%%%%%%%%%%%%%%%%%%%%%%%%%%%%%%%
\subjclass[2000]{Primary 13C20, 13F05; Secondary 11R65, 11R29, 20M14}

%%%%%%%%%%%%%%%%%%%%%%%%%%%%%%%%%%%%%%%%%%%%%%%%%%%%%%%%%
%%%%%%%%%%%%%%%%%%%%%%%%%%%%%%%%%%%%%%%%%%%%%%%%%%%%%%%%%
\keywords{Class semigroup, $t$-class semigroup, Clifford semigroup, Boolean semigroup, $t$-ideal, $t$-operation, $v$-domain, B\'ezout domain, GCD domain, valuation domain, Pr\"ufer domain}

\dedicatory{}

%%%%%%%%%%%%%%%%%%%%%%%%%%%%%%%%%%%%%%%%%%%%%%%%%%%%%%%%%
%%%%%%%%%%%%%%%%%%%%%%%%%%%%%%%%%%%%%%%%%%%%%%%%%%%%%%%%%
\begin{abstract}
The $t$-class semigroup of an integral domain is the semigroup of the isomorphy classes of the $t$-ideals with the operation induced
by $t$-multiplication. This paper investigates  integral domains with Boolean $t$-class semigroup with an emphasis on the GCD and stability conditions. The main results establish $t$-analogues for well-known results on Pr\"ufer domains and B\'ezout domains of finite character.
\end{abstract}

\maketitle

%%%%%%%%%%%%%%%%%%%%%%%%%%%%%%%%%%%%%%%%%%%%%%%%%%%%%%%%%%%%%%%%%%%%%%%%%%%%%%%%%%%%%%%%%%%%%%%%%%%%%%%%%%%%%%%%%%%%%%%%%%%%%%%%%%%%%%%%%%%%%%
%%%%%%%%%%%%%%%%%%%%%%%%%%%%%%%%%%%%%%%%%%%%%%%%%%%%%%%%%%%%%%%%%%%%%%%%%%%%%%%%%%%%%%%%%%%%%%%%%%%%%%%%%%%%%%%%%%%%%%%%%%%%%%%%%%%%%%%%%%%%%%
%%%%%%%%%%%%%%%%%%%%%%%%%%%%%%%%%%%%%%%%%%%%%%%%%%%%%%%%%%%%%%%%%%%%%%%%%%%%%%%%%%%%%%%%%%%%%%%%%%%%%%%%%%%%%%%%%%%%%%%%%%%%%%%%%%%%%%%%%%%%%%
%%%%%%%%%%%%%%%%%%%%%%%%%%%%%%%%%%%%%%%%%%%%%%%%%%%%%%%%%%%%%%%%%%%%%%%%%%%%%%%%%%%%%%%%%%%%%%%%%%%%%%%%%%%%%%%%%%%%%%%%%%%%%%%%%%%%%%%%%%%%%%
\section{Introduction}

\noindent All rings considered in this paper are integral domains (i.e., commutative with identity and without zero-divisors). The class semigroup of a domain $R$, denoted $S(R)$, is the semigroup of nonzero fractional ideals modulo its subsemigroup of nonzero principal ideals \cite{BS,ZZ}. The $t$-class semigroup of $R$, denoted $S_{t}(R)$, is the semigroup of fractional $t$-ideals modulo its subsemigroup of nonzero principal ideals, that is, the semigroup of the isomorphy classes of the $t$-ideals of $R$ with the operation induced by ideal $t$-multiplication. Notice that $S_{t}(R)$ is the $t$-analogue of $S(R)$, as the class group $\Cl(R)$ is the $t$-analogue of the Picard group $\Pic(R)$. The following set-theoretic inclusions always hold: $\Pic(R)\subseteq \Cl(R)\subseteq \mathbf{S_{t}(R)}\subseteq S(R)$.
Note that the first and third inclusions turn into equality for Pr\"ufer domains and the second does so for Krull domains. More details on these objects are provided in the next section.

Divisibility properties of a domain $R$ are often reflected in group or semigroup-theoretic properties of $\Cl(R)$ or $S(R)$. For instance, a Pr\"ufer (resp., Krull, PVMD) domain $R$ is B\'ezout (resp., UFD, GCD) if and only if $\Cl(R)=0$ \cite{BZ}. Also if $R$ is a Dedekind domain (resp., PID), then $S(R)$ is a Clifford (resp., Boolean) semigroup. Recently, we showed that $S_{t}(R)$ is a Clifford semigroup for any Krull domain $R$; and a domain $R$ is a UFD if and only if $R$ is Krull and $S_{t}(R)$ is a Boolean semigroup \cite[Proposition 2.2]{KM2}. Recall for convenience that a commutative semigroup $S$ is Clifford if every element $x$ of $S$ is (von Neumann) regular, i.e., there exists $a\in S$ such that $x^{2}a=x$. The importance of a Clifford semigroup $S$ resides in its ability to stand as a disjoint union of subgroups $G_e$, where $e$ ranges over the set of idempotent elements of $S$, and $G_e$ is the largest subgroup of $S$ with identity equal to $e$ (Cf. \cite{Ho}). The semigroup $S$ is said to be Boolean if for each $x\in S$, $x=x^{2}$.

A domain $R$ is called a GCD domain if every pair of (nonzero) elements of $R$ has a greatest common divisor; equivalently, if the $t$-closure of any nonzero finitely generated fractional ideal of $R$ is principal \cite{Ad2}. UFDs, B\'ezout domains and polynomial rings over them are GCD domains. Ideal $t$-multiplication converts the notion of B\'ezout (resp., Pr\"ufer) domain of finite character to GCD domain of finite $t$-character (resp., Krull-type domain). A domain is stable if each nonzero ideal is invertible in its endomorphism ring (see more details in Section 2). Stability plays a crucial role in the study of class and $t$-class semigroups. Indeed, a stable domain has Clifford class semigroup \cite[Proposition 2.2]{Ba4} and finite character \cite[Theorem 3.3]{O3}; and an integrally closed stable domain is Pr\"ufer \cite[Lemma F]{ES}. Of particular relevance to our study is Olberding's result that an integrally closed domain R is stable if and only if R is a strongly discrete Pr\"ufer domain of finite character \cite[Theorem 4.6]{O1}. An analogue to this result is stated for B\'ezout domains of finite character in \cite[Theorem 3.2]{KM,KM1}.

Recall that a valuation domain has Clifford class semigroup (Bazzoni-Salce \cite{BS}); and an integrally closed domain $R$ has
Clifford class semigroup if and only if $R$ is Pr\"ufer of finite character (Bazzoni \cite[Theorem 2.14]{Ba1} and
\cite[Theorem 4.5]{Ba4}). In 2007, we extended these results to PVMDs; namely, a PVMD $R$ has Clifford $t$-class
semigroup if and only if $R$ is a Krull-type domain \cite[Theorem 3.2]{KM2}; and conjectured that this result extends to
$v$-domains (definition below). Recently, Halter-Koch solved this conjecture by using the language of ideal systems
on cancellative commutative monoids. He proved that every $t$-Clifford regular $v$-domain is a Krull-type
domain \cite[Proposition 6.11 and Proposition 6.12]{HK2}. Finally, recall Zanardo-Zannier's crucial result that an integrally closed domain with Clifford class semigroup is necessarily Pr\"ufer \cite{ZZ}. In \cite{KM}, we stated a Boolean analogue for this result, that is, an integrally closed domain with Boolean class semigroup is B\'ezout. However, in \cite[Example 2.8]{KM2}, we showed that an integrally closed domain with Boolean $t$-class semigroup need not be a PVMD (a fortiori, nor a GCD). Consequently, the class of  $v$-domains offers a natural context for studying $t$-class semigroups.

Recall from \cite{AHZ} that the pseudo-integral closure of a domain $R$ is defined as $\widetilde{R}=\bigcup(I_{t}\colon I_{t})$, where
$I$ ranges over the set of finitely generated ideals of $R$; and $R$ is said to be a  $v$-domain (or pseudo-integrally closed) if $R=\widetilde{R}$ or, equivalently, if $(I_{v}:I_{v}) = R$ for each nonzero finitely generated ideal $I$ of $R$.
A $v$-domain is called in Bourbaki's language  regularly integrally closed \cite[Ch.VII, Exercise 30]{Bou}.
Notice that $\overline{R}\subseteq \widetilde{R}\subseteq R^{\star}$, where $\overline{R}$ and $R^{\star}$ are respectively
the integral closure and the complete integral closure of $R$; and a PVMD is a $v$-domain. For recent developments on $v$-domains, we refer the reader to \cite{AAFZ,FHP,FZ,HK1,HK2}.

This paper studies $v$-domains with Boolean $t$-class semigroup with an emphasis on the GCD and
stability conditions. Our aim is to establish Boolean analogues for the aforementioned results on Pr\"ufer and B\'ezout domains of finite character. The first main result (Theorem~\ref{sec:2.1}) asserts that ``\emph{a $v$-domain with Boolean $t$-class semigroup is GCD with finite $t$-character}." Then  Corollay~\ref{sec:2.2.1} provides a Boolean analogue for (the necessity part of) Bazzoni's result mentioned above. The converse does not hold in general even for valuation domains (Remark~\ref{sec:2.2.2}). The second main result (Theorem~\ref{sec:2.3}) states a correlation between the Boolean property and stability, i.e., ``\emph{a $v$-domain has Boolean $t$-class semigroup and is strongly $t$-discrete if and only if it is strongly $t$-stable}." The third main result (Theorem~\ref{sec:2.8}) examines the class of strongly $t$-discrete domains; namely, ``\emph{assume  $R$ is a $v$-domain. Then $R$ is a strongly $t$-discrete Boole $t$-regular domain if and only if $R$ is a strongly $t$-discrete GCD domain of finite $t$-character  if and only if $R$ is a strongly $t$-stable domain}." Then Corollay~\ref{sec:2.9} recovers and improves \cite[Theorem 3.2]{KM,KM1} which provides a Boolean analogue for Olberding's result \cite[Theorem 4.6]{O1} on Pr\"ufer domains. The corollary also may be viewed as an analogue for Bazzoni's result \cite[Theorem 4.5]{Ba4} in the context of strongly discrete domains. We close with a simple method to build a new family of integral domains with Boolean $t$-class semigroup stemming from the class of GCD domains.

%%%%%%%%%%%%%%%%%%%%%%%%%%%%%%%%%%%%%%%%%%%%%%%%%%%%%%%%%%%%%%%%%%%%%%%%%%%%%%%%%%%%%%%%%%%%%%%%%%%%%%%%%%%%%%%%%%%%%%%%%%%%%%%%%%%%%%%%%%%%%
%%%%%%%%%%%%%%%%%%%%%%%%%%%%%%%%%%%%%%%%%%%%%%%%%%%%%%%%%%%%%%%%%%%%%%%%%%%%%%%%%%%%%%%%%%%%%%%%%%%%%%%%%%%%%%%%%%%%%%%%%%%%%%%%%%%%%%%%%%%%%%
%%%%%%%%%%%%%%%%%%%%%%%%%%%%%%%%%%%%%%%%%%%%%%%%%%%%%%%%%%%%%%%%%%%%%%%%%%%%%%%%%%%%%%%%%%%%%%%%%%%%%%%%%%%%%%%%%%%%%%%%%%%%%%%%%%%%%%%%%%%%%%
%%%%%%%%%%%%%%%%%%%%%%%%%%%%%%%%%%%%%%%%%%%%%%%%%%%%%%%%%%%%%%%%%%%%%%%%%%%%%%%%%%%%%%%%%%%%%%%%%%%%%%%%%%%%%%%%%%%%%%%%%%%%%%%%%%%%%%%%%%%%%%
\section{Main results}\label{sec:2}

Let $R$ be a domain with quotient field $K$ and $I$ a nonzero fractional ideal of $R$. Let $$I^{-1}:=(R:I)=\{x\in K\mid xI\subseteq R\}.$$ The $v$- and $t$-operations of $I$ are defined, respectively, by $$I_v:=(I^{-1})^{-1} \hbox{ and } I_t:=\bigcup J_v$$ where $J$ ranges over the set of finitely generated subideals of $I$. The ideal $I$ is called a $v$-ideal if $I_v=I$ and a $t$-ideal if $I_t=I$. Under the ideal $t$-multiplication $(I,J)\mapsto (IJ)_t$, the set $F_{t}(R)$ of fractional $t$~-ideals of $R$ is a semigroup with unit $R$. The set $Inv_{t}(R)$ of $t$-invertible fractional $t$-ideals of $R$ is a group with unit $R$ (Cf. \cite{Gi}). Let $F(R)$, $Inv(R)$, and $P(R)$ denote the sets of nonzero, invertible, and nonzero principal fractional ideals of $R$, respectively. Under this notation, the Picard group \cite{A,BM,G}, class group \cite{B,BZ}, $t$-class semigroup \cite{KM2}, and class semigroup \cite{BS,KM,KM1,ZZ} of $R$ are defined as follows:
\[\Pic(R):=\frac{Inv(R)}{P(R)}\ ;\
\Cl(R):=\frac{Inv_{t}(R)}{P(R)}\ ;\
\mathbf{S_{t}(R):=\frac{F_{t}(R)}{P(R)}}\ ;\
S(R):=\frac{F(R)}{P(R)}.\]

%%%%%%%%%%%%%%%%%%%%%%%%%%%%%%%%%%%%%%%%%%%%%%%%%%%%%%%%%%%%%%%%%%%%%%
%%%%%%%%%%%%%%%%%%%%%%%%%%%%%%%%%%%%%%%%%%%%%%%%%%%%%%%%%%%%%%%%%%%%%%
\begin{definition}[\cite{Ba4,BaKa,KM,KM2}]
Let $R$ be a domain.
\begin{enumerate}
\item $R$ is Clifford (resp., Boole) regular if $S(R)$ is a Clifford (resp., Boolean) semigroup.
\item $R$ is Clifford (resp., Boole) $t$-regular if $S_{t}(R)$ is a Clifford (resp., Boolean) semigroup.
\end{enumerate}
\end{definition}

A first correlation between regularity and stability conditions can be sought through Lipman stability. Indeed, $R$ is called an L-stable domain if $\bigcup_{n\geq 1} (I^{n}:I^{n})=(I:I)$ for every nonzero ideal $I$ of $R$ \cite{AHP}. Lipman introduced the notion of stability in the specific setting of one-dimensional commutative semi-local Noetherian rings in order to give a characterization of Arf rings; in this context, L-stability coincides with Boole regularity \cite{Lip}. A domain $R$ is stable (resp., strongly stable) if each nonzero ideal of $R$ is invertible (resp., principal) in its endomorphism ring \cite{AHP,KM}. Sally and Vasconcelos \cite{SV} used stability to settle Bass' conjecture on one-dimensional Noetherian rings with finite integral closure. Recent developments on this concept, due to Olberding \cite{O1,O2,O3}, prepared the ground to address the correlation between stability and the theory of class semigroups. By analogy, we define $t$-stability as a natural condition that best suits $t$-regularity:

%%%%%%%%%%%%%%%%%%%%%%%%%%%%%%%%%%%%%%%%%%%%%%%%%%%%%%%%%%%%%%%%%%%%%%
%%%%%%%%%%%%%%%%%%%%%%%%%%%%%%%%%%%%%%%%%%%%%%%%%%%%%%%%%%%%%%%%%%%%%%
\begin{definition}[\cite{KM4}]
Let $R$ be a domain.
\begin{enumerate}
\item  $R$ is $t$-stable if each $t$-ideal of $R$ is invertible in its endomorphism ring.
\item  $R$ is strongly $t$-stable if each $t$-ideal of $R$ is  principal in its endomorphism ring.
\end{enumerate}
\end{definition}

The main purpose of this work is to correlate Boole $t$-regularity with the GCD property or strong $t$-stability in the class of $v$-domains, extending known results on B\'ezout domains and stability. The first main result of this paper (Theorem~\ref{sec:2.1}) establishes a correlation between Boole $t$-regularity and GCD-domains of finite $t$-character. Recall that a domain $R$ is of  finite $t$-character if each proper $t$-ideal of $R$ is contained in only finitely many $t$-maximal ideals of $R$.

%%%%%%%%%%%%%%%%%%%%%%%%%%%%%%%%%%%%%%%%%%%%%%%%%%%%%%%%%%%%%%%%%%%%%%
%%%%%%%%%%%%%%%%%%%%%%%%%%%%%%%%%%%%%%%%%%%%%%%%%%%%%%%%%%%%%%%%%%%%%%
\begin{thm}\label{sec:2.1}
Let $R$ be a $v$-domain. If $R$ is Boole $t$-regular, then $R$ is a GCD domain of finite $t$-character.
\end{thm}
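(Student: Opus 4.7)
My plan is to establish the GCD property first — by exploiting the fact that $t$-invertibility of nonzero finitely generated ideals comes for free in a $v$-domain — and then to derive finite $t$-character by invoking an existing structure theorem. The key observation is that Boole $t$-regularity, combined with this $t$-invertibility, forces the $t$-closure of every nonzero finitely generated ideal to be principal.

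Let $I=(a_1,\dots,a_n)$ be any nonzero finitely generated ideal of $R$. Because $R$ is a $v$-domain, $I$ is $t$-invertible, i.e., $(II^{-1})_t=R$ (and $I_v=I_t$). Boole $t$-regularity applied to the class $[I_t]\in S_t(R)$ yields $[I_t]=[I_t]^2$; unpacking the definition of $S_t(R)=F_t(R)/P(R)$, this means $(I^2)_t=cI_t$ for some $c\in K^*$. Exploiting associativity of $t$-multiplication together with $t$-invertibility of $I$, I compute
$$I_t=(I\cdot II^{-1})_t=((I^2)_t\cdot I^{-1})_t=(cI_tI^{-1})_t=c(II^{-1})_t=cR,$$
so $I_t$ is principal. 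Since every nonzero finitely generated ideal has principal $t$-closure, the characterization recalled in the introduction (cf.\ \cite{Ad2}) forces $R$ to be a GCD domain.

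For the finite $t$-character conclusion, I would observe that any Boolean semigroup is automatically Clifford (each element is its own inverse with respect to itself as idempotent), so $R$ is Clifford $t$-regular. Since $R$ has just been shown to be a GCD domain, it is in particular a PVMD (and of course still a $v$-domain), so the main theorem \cite[Theorem~3.2]{KM2} --- or Halter-Koch's extension \cite[Propositions~6.11 and~6.12]{HK2} which applies directly to $v$-domains --- asserts that $R$ is a Krull-type domain, and hence has finite $t$-character by definition.

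The delicate point, and main obstacle I anticipate, is the cancellation step in the displayed computation: one must be sure that the relation $(I^2)_t=cI_t$ genuinely permits cancellation by $I^{-1}$ in the semigroup $F_t(R)$, and this rests squarely on having $(II^{-1})_t=R$. Without the $v$-domain hypothesis this step is not legitimate --- one would only have $(II^{-1})_t\subseteq R$ in general --- and the whole argument collapses, as \cite[Example~2.8]{KM2} confirms by exhibiting an integrally closed Boole $t$-regular domain that fails to be a PVMD. Once $t$-invertibility is secured from the $v$-domain hypothesis, the algebraic manipulation is straightforward, and the finite $t$-character step reduces to a direct citation.
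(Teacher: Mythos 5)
The central step of your argument rests on a false claim: a $v$-domain does \emph{not} have the property that every nonzero finitely generated ideal $I$ is $t$-invertible. The condition ``$(II^{-1})_t=R$ for every nonzero finitely generated $I$'' is precisely the standard characterization of a PVMD, a strictly smaller class; the whole point of working with $v$-domains here (see the discussion of \cite[Example 2.8]{KM2} in the introduction) is that they need not be PVMDs. What the $v$-domain hypothesis does give is $v$-invertibility, $(II^{-1})_v=R$, equivalently $(I_v:I_v)=R$, for finitely generated $I$. Since $II^{-1}$ need not be finitely generated, $(II^{-1})_t$ can be strictly smaller than $(II^{-1})_v$, so you cannot pass between the two. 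As you yourself stress that the displayed cancellation ``rests squarely on having $(II^{-1})_t=R$,'' this is a genuine gap, not a cosmetic one: both the first and the last equality in your chain use exactly the unavailable $t$-invertibility.

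The gap is repairable, in two ways. Either rerun your computation entirely with the $v$-operation: since $I$ and $I^2$ are finitely generated, the Boolean relation $(I^2)_t=cI_t$ reads $(I^2)_v=cI_v$, and then $I_v=(I\cdot II^{-1})_v=((I^2)_vI^{-1})_v=(cI_vI^{-1})_v=c(II^{-1})_v=cR$, using only $v$-invertibility. Or argue as the paper does, avoiding invertibility altogether: from $(I^2)_t=cI_t$ compute $(I_t:(I^2)_t)$ in two ways, namely $(I_t:cI_t)=c^{-1}(I_t:I_t)=c^{-1}R$ and $(I_t:(I_t)^2)=((I_t:I_t):I_t)=(R:I_t)=I^{-1}$, so $I^{-1}=c^{-1}R$ and $I_v=cR$. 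Your second half (Boolean $\Rightarrow$ Clifford, GCD $\Rightarrow$ PVMD, then finite $t$-character from \cite[Theorem 3.2]{KM2}, or directly from \cite{HK2}) is correct and coincides with the paper's argument.
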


\begin{proof}
Let $I$ be a finitely generated ideal of $R$. Since $R$ is a $v$-domain, then $(I_{t}:I_{t})=R$. Since $R$ is Boole
$t$-regular, there exists $0\not =c\in \qf(R)$ such that
$(I^{2})_{t}=cI_{t}$. Hence
$(I_{t}:(I^{2})_{t})=(I_{t}:cI_{t})=c^{-1}(I_{t}:I_{t})=c^{-1}R$. On
the other hand,
$(I_{t}:(I^{2})_{t})=(I_{t}:(I_{t})^{2})=((I_{t}:I_{t}):I_{t})=(R:I_{t})=I^{-1}$.
Hence $I^{-1}=c^{-1}R$. Therefore $I_{v}=cR$, and hence $R$ is a GCD domain. Now, $R$ is a PVMD and Clifford $t$-regular, so $R$ has finite $t$-character by \cite[Theorem 3.2]{KM2}.
\end{proof}

Next, as an application of Theorem~\ref{sec:2.1}, we provide a Boolean analogue for Bazzoni's result \cite[Theorem 4.5]{Ba4} on Clifford regularity. Here we mean the necessity part of this result, since the sufficiency part \cite[Theorem 2.14]{Ba1} does not hold in general for Boole regularity, as shown below.

%%%%%%%%%%%%%%%%%%%%%%%%%%%%%%%%%%%%%%%%%%%%%%%%%%%%%%%%%%%%%%%%%%%%%%
%%%%%%%%%%%%%%%%%%%%%%%%%%%%%%%%%%%%%%%%%%%%%%%%%%%%%%%%%%%%%%%%%%%%%%
\begin{corollary}\label{sec:2.2.1}
Let $R$ be an integrally closed domain. If $R$ is Boole regular, then $R$ is a B\'ezout domain of finite character.
\end{corollary}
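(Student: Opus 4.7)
The plan is to deduce the corollary by bridging $S(R)$ and $S_{t}(R)$ so that Theorem~\ref{sec:2.1} applies, and by invoking Bazzoni's theorem to obtain the finite character assertion.

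First, I would observe that any Boolean semigroup is automatically Clifford: given $x=x^{2}$, the choice $a=x$ yields $x^{2}a=x^{3}=x^{2}=x$. Hence $R$ is Clifford regular. Since $R$ is integrally closed, Bazzoni's theorem (\cite[Theorem 2.14]{Ba1} and \cite[Theorem 4.5]{Ba4}) applies and forces $R$ to be a Pr\"ufer domain of finite character. This already takes care of the ``finite character'' half of the conclusion.

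Next, I would transfer Boole regularity from $S(R)$ to $S_{t}(R)$. For any nonzero $t$-ideal $I$ of $R$, the identity $[I]^{2}=[I]$ in $S(R)$ produces a nonzero $c\in \qf(R)$ with $I^{2}=cI$. Applying the $t$-operation and using $(cI)_{t}=cI_{t}=cI$, one obtains $(I^{2})_{t}=cI$, i.e. $[I]_{t}^{2}=[I]_{t}$ in $S_{t}(R)$. Thus $R$ is Boole $t$-regular. Since a Pr\"ufer domain is a $v$-domain, Theorem~\ref{sec:2.1} applies and gives that $R$ is a GCD domain. A Pr\"ufer GCD domain is B\'ezout (finitely generated ideals are invertible, hence $v$-ideals, in a Pr\"ufer domain, and finitely generated $v$-ideals are principal in a GCD domain), so $R$ is B\'ezout of finite character.

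The only real subtlety is the compatibility of Boole regularity with the $t$-operation needed to descend the Boolean identity from $S(R)$ to $S_{t}(R)$, but this follows immediately from $(cI)_{t}=cI_{t}$ and the assumption that $I$ is a $t$-ideal. Everything else is a clean assembly of Theorem~\ref{sec:2.1}, Bazzoni's Clifford theorem, and the classical identification of B\'ezout domains as Pr\"ufer GCD domains; no further obstacle is expected.
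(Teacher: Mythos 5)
Your proof is correct, but it takes a different route from the paper's. The paper's proof is a two-liner: it first invokes the earlier result that an integrally closed Boole regular domain is B\'ezout (\cite[Proposition 2.3]{KM}, the Boolean analogue of Zanardo--Zannier), and then, since the $t$-operation is trivial on a B\'ezout domain, Theorem~\ref{sec:2.1} immediately supplies finite ($t$-)character. You instead stay at the Clifford level for the external input: Boolean $\Rightarrow$ Clifford, so Bazzoni's theorem gives Pr\"ufer of finite character; then you transfer Boole regularity from $S(R)$ to $S_t(R)$ (the small but genuine verification that $I^2=cI$ for a $t$-ideal $I$ yields $(I^2)_t=cI_t=cI$), apply Theorem~\ref{sec:2.1} to get GCD, and conclude B\'ezout from Pr\"ufer $+$ GCD. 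Both arguments are sound; the trade-off is that the paper leans on the specifically Boolean result \cite[Proposition 2.3]{KM} and uses Theorem~\ref{sec:2.1} only for finite character, whereas your version needs only the Clifford-level machinery of \cite{ZZ,Ba1,Ba4} and makes Theorem~\ref{sec:2.1} do the divisibility-theoretic work (GCD, hence B\'ezout). Two minor remarks: your argument establishes finite character twice (once via Bazzoni, once implicitly via the finite $t$-character of Theorem~\ref{sec:2.1}, the two notions coinciding for Pr\"ufer domains), which is harmless redundancy; and the implication ``Boole regular $\Rightarrow$ Boole $t$-regular'' you verify is worth stating explicitly, as it is exactly what lets you bypass the triviality of the $t$-operation that the paper's proof exploits.
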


\begin{proof}
Recall first that an integrally closed Boole regular domain is B\'ezout \cite[Proposition 2.3]{KM}. Now, in a B\'ezout domain, the $t$-operation coincides with the trivial operation. So Theorem~\ref{sec:2.1} leads to the conclusion.
\end{proof}

%%%%%%%%%%%%%%%%%%%%%%%%%%%%%%%%%%%%%%%%%%%%%%%%%%%%%%%%%%%%%%%%%%%%%%
%%%%%%%%%%%%%%%%%%%%%%%%%%%%%%%%%%%%%%%%%%%%%%%%%%%%%%%%%%%%%%%%%%%%%%
\begin{remark}\label{sec:2.2.2}
The converses of Corollary~\ref{sec:2.2.1} and, a fortiori, Theorem~\ref{sec:2.1} are not true in general even in the context of valuation domains. To see this, recall that any rank-one non-discrete valuation domain $V$ with value group $\Gamma(V)\ncong\R$ has necessarily a non-trivial constituent group. So $S_{t}(V)$ is Clifford but not Boolean \cite[Example 3, p. 142]{BS}.
\end{remark}

A domain $R$ is strongly $t$-discrete if it has no $t$-idempotent $t$-prime ideals, i.e., for every  $t$-prime ideal $P$ of $R$,
$(P^{2})_{t}\subsetneqq P$ \cite{E,KM2}. One can easily check that a $t$-stable domain is $t$-strongly discrete; and a strongly $t$-stable domain is Boole $t$-regular. The second main result of this paper (Theorem~\ref{sec:2.3}) shows that the $t$-strongly discrete property measures how far a Boole $t$-regular domain is from being strongly $t$-stable.

%%%%%%%%%%%%%%%%%%%%%%%%%%%%%%%%%%%%%%%%%%%%%%%%%%%%%%%%%%%%%%%%%%
%%%%%%%%%%%%%%%%%%%%%%%%%%%%%%%%%%%%%%%%%%%%%%%%%%%%%%%%%%%%%%%%%%
\begin{thm}\label{sec:2.3}
Let $R$ be a $v$-domain. Then $R$ is Boole $t$-regular and strongly $t$-discrete if and only if $R$ is strongly $t$-stable.
\end{thm}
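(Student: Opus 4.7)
The reverse implication is essentially already recorded in the paper's remarks preceding the statement: strong $t$-stability implies $t$-stability (hence strong $t$-discreteness), and it directly implies Boole $t$-regularity. So the real work concentrates on the forward direction.

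For the forward direction, Theorem~\ref{sec:2.1} promotes the hypothesis to: $R$ is a GCD (in particular PVMD) domain of finite $t$-character and is strongly $t$-discrete. Fix a $t$-ideal $I$ and set $E=(I:I)$. Boole $t$-regularity yields $c\in K^{*}$ with $(I^{2})_{t}=cI$, and the identity
\[
(E:I)=(I:I^{2})=\bigl(I:(I^{2})_{t}\bigr)=(I:cI)=c^{-1}E
\]
already forces $I\subseteq cE$. The substantive task is the reverse inclusion $cE\subseteq I$, which witnesses $I=cE$ and hence strong $t$-stability at $I$.

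The plan is to pass to localizations at each $t$-maximal ideal $M$. Since $R$ is a PVMD, $R_{M}$ is a valuation domain; strong $t$-discreteness of $R$, together with the finite $t$-character supplied by Theorem~\ref{sec:2.1}, should force $R_{M}$ to be \emph{strongly discrete}, and in that setting every ideal of $R_{M}$ is principal in its endomorphism ring. Localizing the Boole relation and using that the $t$-operation is trivial on a valuation domain gives $(IR_{M})^{2}=c\,IR_{M}$. Writing $IR_{M}=a_{M}V_{M}$ with $V_{M}:=(IR_{M}:IR_{M})$ and cancelling $a_{M}$ then forces $cV_{M}=IR_{M}$ at every $t$-maximal $M$.

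The global step is then direct: for any $x=ce\in cE$ and any $t$-maximal $M$, $xR_{M}\subseteq c\cdot ER_{M}\subseteq cV_{M}=IR_{M}$, since $ER_{M}\subseteq V_{M}$. Because $I=\bigcap_{M}IR_{M}$ for a $t$-ideal in a PVMD, $x\in I$, completing $cE\subseteq I$. The main obstacle I expect is the transfer of strong $t$-discreteness from $R$ down to strong discreteness of each $R_{M}$; once this localization principle is in hand, the argument reduces to local strongly-discrete valuation theory (principality in endomorphism rings) together with the PVMD intersection formula $I=\bigcap_{M}IR_{M}$.
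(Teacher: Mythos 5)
Your outline is correct, but it takes a genuinely different route from the paper's proof. The paper works inside the overring $T=(I:I)$: it shows $T$ is a GCD domain (Lemma~\ref{sec:2.4}), that $I$ stays a $t$-ideal of $T$ (Lemma~\ref{sec:2.7}), and then argues by contradiction that the trace ideal $J=(I(T:I))_{t_{1}}$, if proper, would be a $t$-idempotent $t$-ideal of $T$, which is impossible because $JT_{N}$ would then be an idempotent (hence prime) ideal of the strongly discrete valuation domain $T_{N}=R_{M}$ supplied by Lemma~\ref{sec:2.6}. You instead stay over $R$ and localize: from $(I^{2})_{t}=cI$ you get $I\subseteq cE$ globally via $(E:I)=c^{-1}E$, and $cE\subseteq I$ by checking $IR_{M}=cV_{M}\supseteq cER_{M}$ at each $t$-maximal $M$ and using $I=\bigcap_{M}IR_{M}$ (valid since $t=w$ on a PVMD, with $(I^{2})_{t}R_{M}=I^{2}R_{M}$ as in \cite[Lemma 3.3]{KM2}). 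This bypasses Lemmas~\ref{sec:2.4} and~\ref{sec:2.7} entirely and produces $I=cE$ directly rather than by contradiction, which is a cleaner endgame; but it rests on two inputs you assert without proof. The first, that strong $t$-discreteness of the PVMD $R$ forces each $R_{M}$ to be a strongly discrete valuation domain, is exactly the paper's Lemma~\ref{sec:2.6} (finite $t$-character is not needed for it), so the ``obstacle'' you flag is real but already settled there. The second is more substantive: that every nonzero ideal of a strongly discrete valuation domain is principal in its endomorphism ring. This is true --- it is the local case of Olberding's stability theorem \cite[Theorem 4.6]{O1}, and it also admits a short direct proof: $V_{M}:=(IR_{M}:IR_{M})$ is a localization $W$ of $R_{M}$ at a prime, hence again a strongly discrete valuation domain, so its maximal ideal is non-idempotent and therefore principal, say $tW$; if $IR_{M}$ were not principal over $W$ one would get $IR_{M}=tIR_{M}$, forcing $t^{-1}\in(IR_{M}:IR_{M})=W$, a contradiction. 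You should either cite this fact or include that argument; the paper's route deliberately avoids it, needing only the elementary observation that idempotent ideals of valuation domains are prime, at the cost of the extra overring machinery.
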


The proof of this theorem requires the following preparatory lemmas. Throughout, $v_{1}$ and $t_{1}$ will denote the $v$- and $t$-operations with respect to an overring $T$ of $R$. Also recall that $T$ is called a $t$-linked overring of $R$ if $I^{-1}=R\Rightarrow IT$ invertible in $T$, for each finitely generated ideal $I$ of $R$ \cite{AHZ,KP}.

%%%%%%%%%%%%%%%%%%%%%%%%%%%%%%%%%%%%%%%%%%%%%%%%%%%%%%%%%%%%%%%%%%
%%%%%%%%%%%%%%%%%%%%%%%%%%%%%%%%%%%%%%%%%%%%%%%%%%%%%%%%%%%%%%%%%%
\begin{lemma}\label{sec:2.4}
Let $R$ be a GCD domain and $T$ a fractional overring of $R$ which is $t$-linked over $R$. Then $T$ is a GCD domain.
\end{lemma}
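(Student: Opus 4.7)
The plan is to use the standard characterization that $T$ is a GCD domain if and only if the $v_{1}$-closure of every nonzero finitely generated (fractional) ideal of $T$ is principal. I would take a typical such ideal $J = (b_{1}, \dots, b_{n})T$ and reduce to the case $J = IT$ for some finitely generated ideal $I$ of $R$. Since $T$ is a fractional overring of $R$, there exists $0 \neq d \in R$ with $dT \subseteq R$, so I can pick $0 \neq s \in R$ with $sb_{i} \in R$ for every $i$ and set $I := (sb_{1}, \dots, sb_{n})R$. Then $IT = sJ$, and since $(sJ)_{v_{1}} = s\, J_{v_{1}}$, it suffices to show that $(IT)_{v_{1}}$ is principal in $T$.

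Next, the two hypotheses come into play. The GCD property of $R$ yields $I_{v} = cR$ for some $c \in R$; because $I \subseteq cR$, the scaled ideal $c^{-1}I$ is a finitely generated integral ideal of $R$ with $(c^{-1}I)_{v} = R$, equivalently $(c^{-1}I)^{-1} = R$. Applying the $t$-linkedness of $T$ over $R$ to the finitely generated ideal $c^{-1}I$ should produce $(T : c^{-1}IT) = T$, hence $(c^{-1}IT)_{v_{1}} = T$ since $(T:T) = T$. Multiplying by $c$ gives $(IT)_{v_{1}} = cT$, so $J_{v_{1}} = s^{-1}cT$ is principal, which completes the argument.

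The main subtlety will be the correct reading of the $t$-linked hypothesis: from ``$I^{-1} = R$ implies $IT$ is invertible in $T$'' one must extract the $v_{1}$-closure identity $(c^{-1}IT)_{v_{1}} = T$. Once this equivalent $t$-linked formulation is in hand, the rest is routine bookkeeping of how the principal scalar $c$ passes between the $v$-operation of $R$ and the $v_{1}$-operation of $T$, together with the standard identities $(cX)_{v} = cX_{v}$ and $(sY)_{v_{1}} = s\, Y_{v_{1}}$ for fractional ideals.
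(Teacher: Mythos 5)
Your proof is correct, and it takes a genuinely different route from the paper's. The paper's own argument is local: it uses the fact that a GCD domain is a PVMD together with Kwak--Park's result that a $t$-linked overring of a PVMD is $t$-flat, so that $T_{N}=R_{N\cap R}$ for every $t$-maximal ideal $N$ of $T$; it then verifies $IT_{N}=cT_{N}$ at each such $N$ and concludes from $I_{v_{1}}=I_{t_{1}}=\bigcap_{N}IT_{N}=cT$. You instead argue globally and directly from the definition of $t$-linkedness: writing a finitely generated ideal of $T$ as a unit multiple of $IT$ with $I$ finitely generated in $R$, using $I_{v}=cR$ to form the integral ideal $c^{-1}I$ with $(c^{-1}I)^{-1}=R$, and then getting $(c^{-1}IT)_{v_{1}}=(T:(T:c^{-1}IT))=(T:T)=T$, hence $(IT)_{v_{1}}=cT$. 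All the individual steps ($I\subseteq I_{v}=cR$ so $c^{-1}I\subseteq R$, compatibility of $v$ and $v_{1}$ with multiplication by nonzero elements, and the characterization of GCD domains via principal $v$-closures of finitely generated ideals) are sound, so the argument is complete; the only point needing care is the one you flag yourself, namely that the operative form of the $t$-linked hypothesis is the standard one of Dobbs--Houston--Lucas--Zafrullah, $I^{-1}=R\Rightarrow (T:IT)=T$ for finitely generated ideals $I$ of $R$ (the word ``invertible'' in the paper's phrasing should be read this way), which is exactly what yields $(c^{-1}IT)_{v_{1}}=T$. Your approach is more elementary --- it avoids the $t$-flatness machinery and the local description of $t_{1}$, and in fact it never uses the hypothesis that $T$ is a fractional overring --- whereas the paper's localization argument has the side benefit of establishing $T_{N}=R_{N\cap R}$, a fact the authors quote again later (e.g., in Claim 4 of the proof of Theorem~\ref{sec:2.8}).
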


\begin{proof}
Since $R$ is a PVMD, by \cite[Proposition 2.10]{KP}, $T$ is $t$-flat over $R$, i.e., $R_{M}=T_{N}$ for each $t$-maximal ideal $N$ of $T$ and $M=N\cap R$. Moreover, since $T$ is $t$-linked over $R$, then $M_{t}\subsetneqq R$ \cite[Proposition 2.1]{DHLZ}. Hence $M$ is a
$t$-prime ideal of $R$ \cite[Corollary 2.47]{Kg}. Let $I$ be a finitely generated ideal of $T$. Then there exists a finitely generated ideal $J$ of $R$ such that $JT=I$. Since $R$ is a GCD domain, then $J_{t}=J_{v}=cR$, for some $c\in \qf(R)=\qf(T)$. Let $N\in\Max_{t}(T)$ and $M=N\cap R$. By \cite[Lemma 3.3]{KM2}, $IT_{N}=JR_{M}=J_{t}R_{M}=cR_{M}=cT_{N}$. We have $I_{v_{1}}=I_{t_{1}}=\bigcap_{N\in \Max_{t}(T)}IT_{N}=cT$ (which forces $c$ to lie in $T$). Therefore $T$ is a GCD domain.
\end{proof}

%%%%%%%%%%%%%%%%%%%%%%%%%%%%%%%%%%%%%%%%%%%%%%%%%%%%%%%%%%%%%%%%%%
%%%%%%%%%%%%%%%%%%%%%%%%%%%%%%%%%%%%%%%%%%%%%%%%%%%%%%%%%%%%%%%%%%
\begin{lemma}\label{sec:2.5}
Let $R$ be a domain and let $P\subseteq Q$ be two $t$-prime ideals of $R$ such that $R_{Q}$ is a valuation domain. Then $PR_{Q}=PR_{P}$.
\end{lemma}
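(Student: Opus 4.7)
The plan is to reduce the statement to the standard fact that in a valuation domain $V$, every prime ideal $\mathfrak p$ satisfies $\mathfrak p V_{\mathfrak p}=\mathfrak p$. First, since $P\subseteq Q$ forces $R\setminus Q\subseteq R\setminus P$, I obtain the tower $R\subseteq R_Q\subseteq R_P$, and by transitivity of localization I may identify $R_P=(R_Q)_{PR_Q}$: the image of $R\setminus P$ in $R_Q$ is precisely the complement of the prime $PR_Q$, and inverting it in $R_Q$ therefore gives $R_P$.

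Second, I would record (or prove) the valuation-domain fact: if $V$ is a valuation domain and $\mathfrak p\in\Spec(V)$, then $\mathfrak p V_{\mathfrak p}=\mathfrak p$. To see this, take $x\in\mathfrak p$ and $s\in V\setminus\mathfrak p$. The divisibility dichotomy available in a valuation domain gives $s\mid x$ or $x\mid s$; the latter would put $s\in xV\subseteq\mathfrak p$, a contradiction, so $s\mid x$ and hence $x/s\in V$. Then $s\cdot(x/s)=x\in\mathfrak p$ together with $s\notin\mathfrak p$ forces $x/s\in\mathfrak p$ by primality, which yields $\mathfrak pV_{\mathfrak p}\subseteq\mathfrak p$; the reverse inclusion is trivial.

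Third, I apply this general principle to $V=R_Q$ (a valuation domain by hypothesis) and $\mathfrak p=PR_Q$. Using the identification $V_{\mathfrak p}=R_P$ from the first step, this gives $PR_Q\cdot R_P=PR_Q$. On the other hand, the inclusion $P\subseteq R_Q\subseteq R_P$ yields $PR_Q\cdot R_P=PR_P$ directly. Chaining the two equalities produces $PR_P=PR_Q$, which is the desired conclusion.

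I do not foresee a genuine obstacle: once the valuation-domain fact $\mathfrak pV_{\mathfrak p}=\mathfrak p$ is in hand, the lemma reduces to a short bookkeeping argument about iterated localization. Note that the $t$-prime hypothesis on $P$ and $Q$ plays no role in the argument itself; it merely fixes the context in which the lemma will later be used, and the statement in fact holds for any pair of primes $P\subseteq Q$ with $R_Q$ valuation.
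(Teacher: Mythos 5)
Your proof is correct, but it takes a genuinely different and more elementary route than the paper. The paper argues by contradiction: picking $x\in PR_{P}\setminus PR_{Q}$, it uses the total ordering of ideals in the valuation domain $R_{Q}$ to get $PR_{Q}\subset xR_{Q}$, hence $x^{-1}\in (R_{Q}:PR_{Q})$, and then invokes the dual-ideal results of Huckaba--Papick \cite{HP} to identify $(R_{Q}:PR_{Q})=(PR_{Q}:PR_{Q})=(R_{Q})_{PR_{Q}}=R_{P}$, so that $x^{-1}\in R_{P}$ contradicts $x\in PR_{P}$. You instead isolate the standard fact that in a valuation domain $V$ every prime satisfies $\mathfrak{p}V_{\mathfrak{p}}=\mathfrak{p}$, prove it directly from the divisibility dichotomy, and combine it with the identification $(R_{Q})_{PR_{Q}}=R_{P}$; this is self-contained (no appeal to \cite{HP}), direct rather than by contradiction, and makes transparent your correct observation that the $t$-prime hypothesis plays no role in the statement itself. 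Both arguments ultimately rest on the same identification $R_{P}=(R_{Q})_{PR_{Q}}$ --- the paper extracts it from the cited machinery on duals of ideals, you from transitivity of localization --- so what your version buys is independence from external references, while the paper's version buys brevity given tools it uses elsewhere. One small phrasing slip on your side: the image of $R\setminus P$ in $R_{Q}$ is not ``precisely'' $R_{Q}\setminus PR_{Q}$ (the latter also contains fractions $a/s$ with $a\notin P$, $s\notin Q$); it is a multiplicative set whose saturation is $R_{Q}\setminus PR_{Q}$, which is all that is needed for $(R_{Q})_{PR_{Q}}=R_{P}$, so the argument is unaffected.
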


\begin{proof}
We used this fact within the proof of \cite[Lemma 2.3]{KM3}. We reproduce here its proof for the sake of completeness. Clearly, $PR_{Q}\subseteq PR_{P}$. Assume $PR_{Q}\subsetneqq PR_{P}$ and $x\in PR_{P}\setminus PR_{Q}$. Then $PR_{Q}\subset xR_{Q}$
since $R_{Q}$ is a valuation domain. Hence, by \cite[Theorem 3.8 and Corollary 3.6]{HP}, $x^{-1}\in (R_{Q}:PR_{Q})=(PR_{Q}:PR_{Q})=(R_{Q})_{PR_{Q}}=R_{P}$, the desired contradiction.
\end{proof}

%%%%%%%%%%%%%%%%%%%%%%%%%%%%%%%%%%%%%%%%%%%%%%%%%%%%%%%%%%%%%%%%%%
%%%%%%%%%%%%%%%%%%%%%%%%%%%%%%%%%%%%%%%%%%%%%%%%%%%%%%%%%%%%%%%%%%
\begin{lemma}\label{sec:2.6}
 A domain $R$ is a strongly $t$-discrete PVMD if and only if $R_{P}$ is a strongly discrete valuation domain for every $t$-prime ideal $P$ of $R$.
\end{lemma}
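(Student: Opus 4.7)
The plan is to reduce both sides to a common local condition: \emph{for every $t$-prime $Q$ of $R$, the maximal ideal $QR_{Q}$ of $R_{Q}$ is non-idempotent}. The reduction rests on the intersection formula $I_{t}=\bigcap_{M\in \Max_{t}(R)}IR_{M}$ available in any PVMD, together with Lemma~\ref{sec:2.5} and two standard PVMD facts I would recall at the outset: $R$ is a PVMD iff $R_{P}$ is a valuation domain for every $t$-prime $P$ (one direction is the definition at $t$-maximals, the other uses that $t$-primes sit inside $t$-maximals so $R_{P}$ is a localization of a valuation domain), and in a PVMD every prime contained in a $t$-prime is itself a $t$-prime.

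The core step is the equivalence: in a PVMD, for a $t$-prime $Q$, one has $(Q^{2})_{t}\subsetneq Q$ iff $QR_{Q}$ is non-idempotent in $R_{Q}$. Applying the intersection formula to $Q^{2}$, the identity $(Q^{2})_{t}=Q$ translates to $QR_{M}=(QR_{M})^{2}$ in every valuation localization $R_{M}$ with $M\supseteq Q$ a $t$-maximal ideal (contributions from $M\not\supseteq Q$ being trivial). Lemma~\ref{sec:2.5} applied to $Q\subseteq M$ gives $QR_{M}=QR_{Q}$; since a product of two ideals is the set of finite sums of products of their elements, this equality of sets upgrades to $(QR_{M})^{2}=(QR_{Q})^{2}$, so idempotence of $QR_{M}$ is equivalent to idempotence of $QR_{Q}$. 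Hence $R$ is strongly $t$-discrete iff $QR_{Q}$ is non-idempotent for every $t$-prime $Q$.

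The last step repackages the right-hand side the same way. For a $t$-prime $P$, the primes of the valuation domain $R_{P}$ have the form $QR_{P}$ with $Q$ a prime of $R$ contained in $P$; in the PVMD setting each such $Q$ is again a $t$-prime. Lemma~\ref{sec:2.5} applied to $Q\subseteq P$ yields $QR_{P}=QR_{Q}$, hence $(QR_{P})^{2}=(QR_{Q})^{2}$. Thus $R_{P}$ is strongly discrete exactly when $QR_{Q}$ is non-idempotent for every $t$-prime $Q\subseteq P$; letting $P$ range over all $t$-primes (noting $P=Q$ is allowed) recovers the local condition of the previous paragraph, completing the equivalence. The one point I expect to need care is the set-theoretic transfer $(QR_{M})^{2}=(QR_{Q})^{2}$ (and the analogue at $P$), since the two squares nominally live in different overrings; but it is a purely formal consequence of $QR_{M}=QR_{Q}$ as subsets of the quotient field.
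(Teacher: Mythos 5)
Your proof is correct and takes essentially the same route as the paper's: both rest on Lemma~\ref{sec:2.5} together with the description of the $t$-closure in a PVMD as the intersection $I_{t}=\bigcap_{M\in\Max_{t}(R)}IR_{M}$, transferring idempotence among $R_{M}$, $R_{Q}$ and $R_{P}$. You merely reorganize the argument around the common local criterion ``$QR_{Q}$ non-idempotent for every $t$-prime $Q$'' and make explicit two facts the paper uses implicitly (that sufficiency gives the PVMD property via the $t$-maximal ideals, and that in a PVMD every prime contained in a $t$-prime is a $t$-prime), so there is nothing to correct.
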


\begin{proof}
Sufficiency is straightforward. Necessity. Let $P$ be a $t$-prime ideal of $R$. Assume there exists a $t$-prime ideal $Q\subseteq P$ such that $Q^{2}R_{P}=QR_{P}$. Let $M$ be an arbitrary $t$-maximal ideal of $R$ containing $Q$. By Lemma~\ref{sec:2.5}, we have $QR_{M}=QR_{Q}=QR_{P}=Q^{2}R_{P}=Q^{2}R_{Q}=Q^{2}R_{M}$. By \cite[Theorem 2.19]{Kg} or \cite[Theorem 6]{Ad}, $(Q^2)_{t}=Q$, absurd. So $R_{P}$ is a strongly discrete valuation domain.
\end{proof}

%%%%%%%%%%%%%%%%%%%%%%%%%%%%%%%%%%%%%%%%%%%%%%%%%%%%%%%%%%%%%%%%%%
%%%%%%%%%%%%%%%%%%%%%%%%%%%%%%%%%%%%%%%%%%%%%%%%%%%%%%%%%%%%%%%%%%
\begin{lemma}[{\cite[Lemma 2.8]{KM3}}]\label{sec:2.7}
Let $R$ be a PVMD and let $I$ be a $t$-ideal of $R$. Then:
\begin{enumerate}
\item $I$ is a $t$-ideal of $(I:I)$.
\item If $R$ is Clifford $t$-regular, then so is $(I:I)$.
\end{enumerate}
\end{lemma}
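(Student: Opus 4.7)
The key observation is that, for $R$ a PVMD and $I$ a $t$-ideal of $R$, the overring $T := (I:I)$ is $t$-linked over $R$ and therefore itself a PVMD. From \cite[Proposition 2.10]{KP} and \cite[Lemma 3.3]{KM2} one then obtains the $t$-flatness relation $R_M = T_N$ for every $t$-maximal ideal $N$ of $T$, where $M := N\cap R$ is a $t$-prime of $R$ — exactly the toolkit already deployed in Lemma~\ref{sec:2.4}.

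For (1), let $J$ be an arbitrary finitely generated $T$-subideal of $I$. Choose generators $a_1,\ldots,a_n \in I$ so that $J = a_1 T + \cdots + a_n T$ and set $J_0 := a_1 R + \cdots + a_n R \subseteq I$, so that $J_0 T = J$. For each $t$-maximal ideal $N$ of $T$ with contraction $M$, localization gives $JT_N = J_0 R_M \subseteq I R_M$. Intersecting over all such $N$ and using both the PVMD intersection formula $J_{t_1} = \bigcap_N JT_N$ in $T$ and $I = \bigcap_{M' \in \Max_t(R)} I R_{M'}$ in $R$, one deduces $J_{t_1} \subseteq I$. As $J$ was arbitrary, $I_{t_1} = I$.

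For (2), since $R$ is a Clifford $t$-regular PVMD, \cite[Theorem 3.2]{KM2} gives that $R$ is of finite $t$-character; as $T$ is also a PVMD, it suffices to show $T$ inherits finite $t$-character and then apply \cite[Theorem 3.2]{KM2} again. Given a proper $t$-ideal $\mathfrak{a}$ of $T$, the map $N \mapsto N\cap R$ sends the $t$-maximal ideals of $T$ containing $\mathfrak{a}$ injectively into the $t$-primes of $R$ containing the nonzero ideal $\mathfrak{a} \cap R$ — injectivity holds because $T_N = R_{N\cap R}$ and the maximal ideal of this common local ring determines $N$. Combining this with the $t$-linked structure identifies the image with a subset of the $t$-maximals of $R$ containing $\mathfrak{a} \cap R$, and the finite $t$-character of $R$ closes the count.

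The main obstacle is the transfer of finite $t$-character in (2): one must rule out a $t$-maximal of $T$ contracting to a non-$t$-maximal $t$-prime of $R$ in a way that produces infinitely many contractions for a fixed $\mathfrak{a}\cap R$, and this is where the $t$-linked/$t$-flat structure of $T/R$ has to be unpacked most carefully. Part (1) is computationally lighter but contains the subtle point that the pieces $J_0 R_M$ must reassemble \emph{inside} $I$ rather than merely inside $T$, which is exactly what the $t$-ideal intersection formula in the PVMD $R$ delivers.
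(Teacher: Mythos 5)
The paper does not prove this lemma at all (it is quoted from \cite[Lemma 2.8]{KM3}), so your argument can only be judged on its own terms; as written it has a genuine gap in each part. In part (1) the decisive step is missing. You correctly reduce to showing $J_{t_1}=\bigcap_{N\in\Max_t(T)}JT_N\subseteq I$ and you correctly get $JT_N=J_0R_M\subseteq IR_M$ for $M=N\cap R$, but the two intersection formulas you then invoke are indexed by \emph{different} families: $I=\bigcap_{M'\in\Max_t(R)}IR_{M'}$ runs over all $t$-maximal ideals of $R$, whereas your intersection runs over the contractions $N\cap R$, which by \cite[Proposition 2.1]{DHLZ} are merely $t$-primes of $R$, need not be $t$-maximal, and need not include every $t$-maximal ideal of $R$ containing $I$ (take $R$ a two-dimensional valuation domain with idempotent height-one prime $P$ and $I=P$, so $T=R_P$: the only contraction is $P$, while the $t$-maximal ideal $M$ of $R$ containing $I$ is not a contraction). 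Hence ``$\bigcap_N IR_{N\cap R}\subseteq I$'' does not follow formally from the displayed formulas; proving it is essentially the whole content of (1). A repair has to treat the $t$-maximal ideals $M'$ of $R$ separately: if $I\not\subseteq M'$ then $T\subseteq R_{M'}$ (localize $xI\subseteq I$ and use $IR_{M'}=R_{M'}$), while for $M'\supseteq I$ one must produce a $t$-maximal $N$ of $T$ with $IT_N\subseteq IR_{M'}$ (via $(IR_{M'}:IR_{M'})=R_Q$ for a $t$-prime $Q\subseteq M'$, Lemma~\ref{sec:2.5}-type arguments, and the fact that the relevant prime of $T$ survives $t_1$-closure); none of this is in your text.

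In part (2) the strategy (transfer finite $t$-character to the PVMD $T$ and apply \cite[Theorem 3.2]{KM2} twice) is sound, but the sentence identifying the image of $N\mapsto N\cap R$ ``with a subset of the $t$-maximals of $R$ containing $\mathfrak{a}\cap R$'' is false: in the example above the unique $t$-maximal ideal of $T$ contracts to the non-$t$-maximal prime $P$. This matters because finite $t$-character bounds the number of $t$-\emph{maximal} ideals containing a nonzero element, not the number of $t$-primes containing it (below a single $t$-maximal ideal there may be infinitely many $t$-primes containing the element), so your count does not ``close'' as stated. The argument can be saved: the contractions of distinct $t$-maximal ideals of $T$ are pairwise incomparable $t$-primes of $R$ (if $N_1\cap R\subseteq N_2\cap R$ then $T_{N_2}\subseteq T_{N_1}$, which forces $N_1\subseteq N_2$ and hence $N_1=N_2$), each contains a fixed nonzero element of $\mathfrak{a}\cap R$, and in a PVMD the $t$-primes inside a fixed $t$-maximal ideal form a chain (localizations at $t$-maximal ideals are valuation domains); so there is at most one contraction under each of the finitely many $t$-maximal ideals of $R$ containing that element. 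With these two repairs your outline becomes a correct proof, but as submitted both the gluing step in (1) and the counting step in (2) are unjustified.
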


%%%%%%%%%%%%%%%%%%%%%%%%%%%%%%%%%%%%%%%%%%%%%%%%%%%%%%%%%%%%%%%%%%%%%%%%%%%%%%%%%%%%%%%
\begin{proof}[Proof of Theorem~\ref{sec:2.3}]
We need only prove the ``only if" assertion. Suppose $R$ is a Boole
$t$-regular and strongly $t$-discrete domain and let $I$ be a $t$-ideal of $R$. By
Theorem~\ref{sec:2.1}, $R$ is a GCD domain (and hence a PVMD). Moreover, $T:=(I:I)$ is a fractional $t$-linked overring of R (Cf. \cite[p. 1445]{KM3}). Hence $T$ is a GCD domain by Lemma~\ref{sec:2.4}. By Lemma~\ref{sec:2.7}, $I$ is a $t$-ideal of $T$. Suppose by way of contradiction that $J:=(I(T:I))_{t_{1}}\subsetneqq T$.

\begin{claim} $J$ is a fractional $t$-ideal of $R$.\end{claim}

Indeed, clearly $J$ is a fractional ideal of $R$. Let $x\in J_{t}$.
Then there exists a finitely generated ideal $B$ of $R$ such that
$B\subseteq J$ and $x(R:B)\subseteq R$. Similar arguments as above
yield $x\in \bigcap_{N\in \Max_{t}(T)}JT_{N}=J_{t_{1}}=J$. Therefore
$J=J_{t}$.

\begin{claim} $(J^{2})_{t_{1}}=cJ$ for some $0\not
=c\in \qf(R)$.\end{claim}

Indeed, there exists $0\not =c\in \qf(R)$ such that $(J^{2})_{t}=cJ$
since $R$ is Boole $t$-regular. Then $(J^{2})_{t_{1}}\subseteq
(cJ)_{t_{1}}=cJ$. Conversely, let $x\in cJ=(J^{2})_{t}$. Then there
exists a finitely generated ideal $A$ of $R$ such that $A\subseteq
J^{2}$ and $x(R:A)\subseteq R$. Similarly as above we get $x\in
\bigcap_{N\in \Max_{t}(T)}J^{2}T_{N}=(J^{2})_{t_{1}}$. Therefore
$(J^{2})_{t_{1}}=cJ$.

\begin{claim} $J$ is a $t$-idempotent $t$-ideal of $T$. \end{claim}

Indeed, since $J$ is a trace $t$-ideal of $T$ and $R$ is a Clifford
$t$-regular domain, we obtain
$(J:J)=(T:J)=(T:(I(T:I))_{t_{1}})=(T:I(T:I))=((I:I):I(T:I))=(I:I^{2}(I:I^{2}))=
(I:(I^{2}(I:I^{2})_{t})=(I:I)=T$. So
$(J:(J^{2})_{t_{1}})=(J:J^{2})=((J:J):J)=(T:J)=T$. Also
$(J:(J^{2})_{t_{1}})=(J:cJ)=c^{-1}(J:J)=c^{-1}T$. Therefore
$T=c^{-1}T$ and thus $c$ is a unit of $T$. Hence
$(J^{2})_{t_{1}}=J$, as claimed.

Now $J$ is a proper $t$-ideal of $T$, then $J$ is contained in a $t$-maximal ideal $N$ of $T$. Then $M=N\cap R$ is a $t$-prime ideal
of $R$ with $T_{N}=R_{M}$. By Lemma~\ref{sec:2.6}, $R_{M}=T_{N}$ is a strongly discrete valuation domain. However, Claim 3 combined with \cite[Lemma 3.3]{KM2} yields $J^{2}T_{N}=(J^{2})_{t_{1}}T_{N}=JT_{N}$. So $JT_{N}$ is an idempotent prime ideal of $T_{N}$ (since a valuation domain), the desired contradiction.

Consequently, $J=T$, i.e., $I$ is a $t$-invertible $t$-ideal of $T$. So there exists a finitely generated ideal $A$ of $T$ such that
$I=A_{v_{1}}$. Then there exists $a\in A$ such that $A_{v_{1}}=aT$
since $T$ is a GCD domain. Hence $I=aT$ and therefore $I$ is
strongly $t$-stable, completing the proof of the theorem.
\end{proof}

The next result shows that all the three notions, involved in Theorems \ref{sec:2.1} and \ref{sec:2.3}, collapse in the context of strongly $t$-discrete domains.

%%%%%%%%%%%%%%%%%%%%%%%%%%%%%%%%%%%%%%%%%%%%%%%%%%%%%%%%%%%%%%%%%%
%%%%%%%%%%%%%%%%%%%%%%%%%%%%%%%%%%%%%%%%%%%%%%%%%%%%%%%%%%%%%%%%%%
\begin{thm}\label{sec:2.8}
Let $R$ be a $v$-domain. The following assertions are equivalent:
\begin{enumerate}
\item $R$ is a strongly $t$-discrete Boole $t$-regular domain;
\item $R$ is a strongly $t$-discrete GCD domain of finite $t$-character;
\item $R$ is a strongly $t$-stable domain.
\end{enumerate}
\end{thm}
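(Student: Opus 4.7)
The plan is to establish the equivalences via the cycle $(1)\Rightarrow(2)\Rightarrow(3)\Rightarrow(1)$. The implication $(1)\Rightarrow(2)$ is immediate: Theorem~\ref{sec:2.1} supplies the GCD and finite $t$-character halves of (2), while the strongly $t$-discrete hypothesis is simply carried across from (1). The implication $(3)\Rightarrow(1)$ follows from the two observations recorded just before Theorem~\ref{sec:2.3}: every $t$-stable domain is strongly $t$-discrete, and every strongly $t$-stable domain is Boole $t$-regular. So everything reduces to the nontrivial direction $(2)\Rightarrow(3)$.

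For $(2)\Rightarrow(3)$, I would mimic the strategy of Theorem~\ref{sec:2.3}'s proof. Let $I$ be an arbitrary $t$-ideal of $R$ and set $T:=(I:I)$. Since $T$ is a fractional $t$-linked overring of $R$, Lemma~\ref{sec:2.4} shows that $T$ is a GCD domain, and Lemma~\ref{sec:2.7}(1) shows that $I$ is a $t$-ideal of $T$. It then suffices to prove that $I$ is $t$-invertible in $T$, because in a GCD domain every $t$-invertible $t$-ideal is principal (as the $v_{1}$-closure of a finitely generated ideal, which is itself principal). Equivalently, I must show $J:=(I(T:I))_{t_{1}}=T$.

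I will argue by contradiction: suppose $J\subsetneq T$, so that $J\subseteq N$ for some $t$-maximal ideal $N$ of $T$, and set $M:=N\cap R$. Exactly as in the proof of Theorem~\ref{sec:2.3}, $M$ is a $t$-prime of $R$ with $T_{N}=R_{M}$, and since $R$ is a strongly $t$-discrete PVMD (GCD is PVMD), Lemma~\ref{sec:2.6} gives that $R_{M}$ is a strongly discrete valuation domain. The key claim I intend to establish is that $IT_{N}$ is principal in the valuation $T_{N}=R_{M}$: the global equality $T=(I:I)$ should localize, via the $t$-flatness of $t$-linked localizations in the PVMD $R$ of finite $t$-character, to $(IT_{N}:IT_{N})=T_{N}$; and in a valuation domain, $(K:K)=V$ holds exactly when $K$ is principal. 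Hence $IT_{N}(T_{N}:IT_{N})=T_{N}$, and a parallel localization of $I(T:I)$ then yields $JT_{N}=IT_{N}(T:I)T_{N}=T_{N}$, contradicting $JT_{N}\subseteq NT_{N}\subsetneq T_{N}$. This forces $J=T$, so $I$ is $t$-invertible hence principal in $T$, establishing that $R$ is strongly $t$-stable.

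The step I anticipate as the main obstacle is the localization identity $(I:I)T_{N}=(IT_{N}:IT_{N})$, which typically requires $I$ to be finitely generated; here $I$ need not be, so the argument must lean on the finite $t$-character of $R$ and the $t$-flatness of $T_{N}$ over $T$. Should that route prove delicate, the backup plan is to reproduce the three-claim structure used inside the proof of Theorem~\ref{sec:2.3}—that $J$ is a $t$-ideal of $R$, that $(J^{2})_{t_{1}}=cJ$ for some $c$, and that $J$ is $t$-idempotent—now extracting the Boole-type relation from the GCD and strongly $t$-discrete valuation structure of $R_{M}$ supplied by Lemma~\ref{sec:2.6}, rather than from a global Boole $t$-regularity hypothesis, and deriving the final contradiction from the absence of nonzero idempotent prime ideals in the strongly discrete valuation $T_{N}$.
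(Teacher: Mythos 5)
Your reduction of the theorem to the implication (2)~$\Rightarrow$~(3) is exactly the paper's skeleton, but your argument for that implication has genuine gaps. First, the pivotal claim that ``in a valuation domain, $(K:K)=V$ holds exactly when $K$ is principal'' is false as stated: for a rank-one valuation domain $V$ with dense value group (e.g.\ $\Q$), the maximal ideal $M$ satisfies $(M:M)=V$ yet $M$ is not principal. What you actually need is that a \emph{strongly discrete} valuation domain is strongly stable (every ideal principal over its endomorphism ring); this is true but is a nontrivial known theorem (it is essentially the local case of \cite[Theorem 4.6]{O1}, or Bazzoni--Salce), and you neither prove nor cite it. Second, and more seriously, your route hinges on localization identities that fail in general for non-finitely-generated ideals and that you explicitly flag but never resolve: you need $(I:I)T_N=(IT_N:IT_N)$ to conclude $(IT_N:IT_N)=T_N$, and even granting that $IT_N=aT_N$ is principal, the desired contradiction $JT_N=T_N$ requires $a^{-1}\in (T:I)T_N$, i.e.\ the further identity $(T:I)T_N=(T_N:IT_N)$, which is a second unjustified localization of a dual. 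Your ``backup plan'' does not close this either: Claims~1--3 in the proof of Theorem~\ref{sec:2.3} obtain $(J^{2})_{t_{1}}=cJ$ precisely from the hypothesis that $R$ is Boole $t$-regular, which is not available under (2) (it is part of what is being proved), and you give no actual replacement derivation from the valuation structure of $R_M$.

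The paper's proof of (2)~$\Rightarrow$~(3) avoids all localization of duals and endomorphism rings. It first transfers strong $t$-discreteness from $R$ to $T:=(I:I)$ (using $R_P=T_Q$ and \cite[Lemma 3.3]{KM2}); then, since a GCD domain of finite $t$-character is Clifford $t$-regular \cite[Theorem 3.2]{KM2}, it gets $I=(IJ)_t$ for $J:=I(T:I)$ and a purely global trace-ideal computation gives $(T:J)=T$. If $J_{t_1}\subsetneqq T$, then $J\subseteq N$ for a $t$-maximal ideal $N$ of $T$ with $(T:N)=(N:N)=T$, and Clifford $t$-regularity of $T$ (Lemma~\ref{sec:2.7}(2)) forces $N=(N^2)_{t_1}$, contradicting the strong $t$-discreteness of $T$. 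That is the mechanism your proposal is missing; without it (or a genuine repair of the localization steps), the implication (2)~$\Rightarrow$~(3) is not established.
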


\begin{proof}
In view of Theorems \ref{sec:2.1} and \ref{sec:2.3}, we need only prove the implication (2)~$\Longrightarrow$~(3).
Let $I$ be a $t$-ideal of $R$. Then $I$ is a $t$-ideal of $T$ by Lemma~\ref{sec:2.7}. Set $T:=(I:I)$ and $J:=I(T:I)$.

\begin{claim} $T$ is strongly $t$-discrete. \end{claim}

Indeed, let $Q$ be a $t$-prime ideal of $T$. Then $P=Q\cap R$ is a
$t$-prime ideal of $R$ with $R_{P}=T_{Q}$ (see the proof of
Lemma~\ref{sec:2.4}). Assume by way of contradiction that
$(Q^2)_{t}=Q$. Then
$P^{2}R_{P}=Q^{2}T_{Q}=(Q^{2})_{t}T_{Q}=QT_{Q}=PR_{P}$ by
\cite[Lemma 3.3]{KM2}. Absurd since $R_{P}$ is strongly discrete by Lemma~\ref{sec:2.6}.

\begin{claim} $J_{t_{1}}=T$. \end{claim}

Indeed, since $R$ is a GCD of finite $t$-character, $R$ is
Clifford $t$-regular by \cite[Theorem 3.2]{KM2}. So, $I=(IJ)_{t}$.
Since $J$ is a trace ideal of $T$, then $T\subseteq
(T:J)=(J:J)\subseteq (IJ:IJ)\subseteq ((IJ)_{t}:(IJ)_{t})=(I:I)=T$,
hence $(T:J)=T$. Assume $J_{t_{1}}\subsetneqq T$. Then
$J\subseteq N$ for some $t$-maximal ideal $N$ of $T$. Hence
$T\subseteq (T:N)\subseteq (T:J)=T$ and so $(T:N)=(N:N)=T$. Then
$(N^{2}(N:N^{2}))_{t_{1}}=(N^{2})_{t_{1}}$. By Lemma~\ref{sec:2.7},
$T$ is Clifford $t$-regular. Therefore
$N=(N^{2}(N:N^{2}))_{t_{1}}=(N^{2})_{t_{1}}$, absurd since $T$ is
strongly $t$-discrete by Claim 4. Consequently,
$J_{t_{1}}=T$, proving the claim

Now $I$ is a $t$-invertible $t$-ideal of $T$ by Claim 5. So
$I=A_{t_{1}}=A_{v_{1}}$ for some finitely generated ideal $A$ of
$T$. Since $T$ is a GCD domain (Lemma~\ref{sec:2.4}), then
$I=A_{v_{1}}=cT$ for some $c\in T$, as desired.
\end{proof}

Next, as an application of the above theorem, we recover and improve \cite[Theorem 3.2]{KM,KM1} which provides a Boolean analogue for Olberding's result \cite[Theorem 4.6]{O1} on Pr\"ufer domains. The corollary also may be viewed as an analogue for Bazzoni's result \cite[Theorem 4.5]{Ba4} in the context of strongly discrete domains.

%%%%%%%%%%%%%%%%%%%%%%%%%%%%%%%%%%%%%%%%%%%%%%%%%%%%%%%%%%%%%%%%%%
%%%%%%%%%%%%%%%%%%%%%%%%%%%%%%%%%%%%%%%%%%%%%%%%%%%%%%%%%%%%%%%%%%
\begin{corollary}\label{sec:2.9}
Let $R$ be an integrally closed domain. The following assertions are equivalent:
\begin{enumerate}
\item $R$ is a strongly discrete Boole regular domain;
\item $R$ is a strongly discrete B\'ezout domain of finite character;
\item $R$ is a strongly stable domain.
\end{enumerate}
\end{corollary}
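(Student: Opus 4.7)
The plan is to reduce Corollary~\ref{sec:2.9} to Theorem~\ref{sec:2.8} by first establishing that, under integral closedness, each of the three conditions forces $R$ to be Bézout, and then exploiting the triviality of the $t$-operation in the Bézout setting to translate the statement of the corollary into that of Theorem~\ref{sec:2.8} verbatim.

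First, I would verify that each of (1), (2), (3), combined with the standing hypothesis that $R$ is integrally closed, forces $R$ to be Bézout. Condition~(1) yields Bézoutness via \cite[Proposition~2.3]{KM}, which is exactly the input already invoked in the proof of Corollary~\ref{sec:2.2.1}. Condition~(2) asserts it directly. For condition~(3), strong stability clearly entails stability, so by \cite[Lemma~F]{ES} $R$ is Prüfer; the standard Prüfer-level fact that $(I:I)=R$ for every nonzero finitely generated ideal $I$ then turns ``principal in its endomorphism ring'' into ``principal'', yielding Bézout.

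Once $R$ is known to be Bézout, two observations finish the job. On the one hand, $R$ is GCD, hence PVMD, hence a $v$-domain, so Theorem~\ref{sec:2.8} is applicable. On the other hand, in a Bézout domain every ideal coincides with its $t$-closure, so the predicates ``strongly discrete'' and ``strongly $t$-discrete'', ``Boole regular'' and ``Boole $t$-regular'', ``finite character'' and ``finite $t$-character'', ``strongly stable'' and ``strongly $t$-stable'' all coincide, and ``Bézout'' coincides with ``GCD''. Under this dictionary, the three items of Theorem~\ref{sec:2.8} translate term by term into the three items of the corollary, which delivers the claimed equivalence.

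I expect the only step that requires more than bookkeeping to be the Bézout reduction in case~(3): it is the single place where two external inputs (\cite[Lemma~F]{ES} and the Prüfer-level identity $(I:I)=R$) have to be fused. The reductions for (1) and (2) are immediate, and the passage from $t$-ideals to all ideals in the Bézout setting is purely formal, so after this one observation the corollary drops out of Theorem~\ref{sec:2.8}.
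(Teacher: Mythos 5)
Your proposal is correct and follows exactly the route the paper intends: the corollary is left as an immediate application of Theorem~\ref{sec:2.8}, obtained (as in the proof of Corollary~\ref{sec:2.2.1}) by noting that each condition together with integral closedness forces $R$ to be B\'ezout, where the $t$-operation is trivial and the two statements coincide term by term. Your only addition is spelling out the reduction for (3) via \cite[Lemma F]{ES} and the identity $(I:I)=R$ for invertible ideals, a detail the paper leaves implicit, and it is handled correctly.
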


We close this paper with a simple method to build a new family of Boole $t$-regular domains originating from the class of GCD domains via Theorem~\ref{sec:2.8}.

%%%%%%%%%%%%%%%%%%%%%%%%%%%%%%%%%%%%%%%%%%%%%%%%%%%%%%%%%%%%%%%%%%
%%%%%%%%%%%%%%%%%%%%%%%%%%%%%%%%%%%%%%%%%%%%%%%%%%%%%%%%%%%%%%%%%%
\begin{example}\label{sec:2.10}
Let $V$ be a strongly discrete valuation domain with dimension $\geq2$, $n$ an integer $\geq2$, and $X_{1}, \cdots, X_{n-1}$ indeterminates over $V$. Then $R:=V[X_{1}, \cdots, X_{n-1}]$ is an $n$-dimensional Boole $t$-regular domain.
\end{example}

To prove this, we first  establish the following lemma, which is a re-phrasing of Statement (3) in \cite[Lemma 3.1]{KM3} and where we substitute the assumption ``integrally closed domain" to ``valuation domain."

%%%%%%%%%%%%%%%%%%%%%%%%%%%%%%%%%%%%%%%%%%%%%%%%%%%%%%%%%%%%%%%%%%
%%%%%%%%%%%%%%%%%%%%%%%%%%%%%%%%%%%%%%%%%%%%%%%%%%%%%%%%%%%%%%%%%%
\begin{lemma}\label{sec:2.11} Let $R$ be an integrally closed domain and $X$ an indeterminate over $R$. Then $R$ is strongly $t$-discrete if and only if $R[X]$ is strongly $t$-discrete.
\end{lemma}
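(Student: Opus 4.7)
The plan is to classify the $t$-prime ideals of $R[X]$ according to whether their contraction to $R$ vanishes, and to leverage two classical facts available under the integrally closed hypothesis: (i) for every ideal $I$ of $R$ one has $(I[X])_{t}=I_{t}[X]$; and (ii) the assignment $P\mapsto P[X]$ defines a bijection between the $t$-prime ideals of $R$ and those $t$-prime ideals $Q$ of $R[X]$ satisfying $Q\cap R\neq (0)$. Both of these are due to Kang for integrally closed $R$, and together with the structure of uppers to zero they reduce the equivalence to a short case analysis.

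For the ``only if" direction, suppose $R$ is strongly $t$-discrete and let $Q$ be a $t$-prime of $R[X]$. If $P:=Q\cap R\neq (0)$, then (ii) gives $Q=P[X]$ with $P$ a $t$-prime of $R$, and combining (i) applied to $P^{2}$ with the hypothesis $(P^{2})_{t}\subsetneqq P$ yields
\[
(Q^{2})_{t}=(P^{2}[X])_{t}=(P^{2})_{t}[X]\subsetneqq P[X]=Q.
\]
If instead $Q\cap R=(0)$, then $Q$ is an upper to zero, so $QK[X]=fK[X]$ for some $f\in Q$ of positive degree, where $K=\qf(R)$. Were $f$ to lie in $(Q^{2})_{t}$, then localizing at $R\setminus\{0\}$ and using that $K[X]$ is a PID would force
\[
f\in (Q^{2})_{t}K[X]\subseteq (QK[X])^{2}=f^{2}K[X],
\]
so $f$ would be a unit of $K[X]$---impossible. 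Therefore $(Q^{2})_{t}\subsetneqq Q$ in both cases.

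For the converse, assume $R[X]$ is strongly $t$-discrete and let $P$ be a $t$-prime of $R$. By (ii), $P[X]$ is a $t$-prime of $R[X]$, so the hypothesis gives $((P[X])^{2})_{t}\subsetneqq P[X]$; applying (i) to $P^{2}$ rewrites this as $(P^{2})_{t}[X]\subsetneqq P[X]$, and contracting to $R$ via the constant-term map yields $(P^{2})_{t}\subsetneqq P$. Hence $R$ is strongly $t$-discrete.

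The principal obstacle is the rigorous justification of (i) and (ii), which is exactly where the integrally closed hypothesis enters and where it permits the broadening of the original valuation-domain version \cite[Lemma 3.1]{KM3}. Once (i) and (ii) are in hand, the argument reduces to the routine equivalence $(P^{2})_{t}[X]\subsetneqq P[X]\Longleftrightarrow (P^{2})_{t}\subsetneqq P$ together with the upper-to-zero computation above.
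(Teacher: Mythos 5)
Your proof is correct and takes essentially the same route as the paper: split the $t$-primes $Q$ of $R[X]$ according to whether $Q\cap R$ is zero, use Querr\'e/Kang (which is exactly where the integrally closed hypothesis enters) to write $Q=(Q\cap R)[X]$ together with $(I[X])_{t}=I_{t}[X]$ in the nonzero case, and dispose of the upper-to-zero case by localizing at $R\setminus\{0\}$ to $\qf(R)[X]$. The only cosmetic difference is that you handle the upper to zero via an explicit generator $f$ of $QK[X]$ instead of invoking, as the paper does, that $t$-idempotence passes to the localization \cite[Lemma 2.6]{KM2}.
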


\begin{proof}
Necessity. Assume $R$ is strongly $t$-discrete and $P$  is a $t$-idempotent $t$-prime ideal of $R[X]$ with $p:=P\cap R$.
If $p=(0)$ and $S:=R\setminus\{0\}$, then by \cite[Lemma 2.6]{KM2} $S^{-1}P$ is an
idempotent (nonzero) ideal of $S^{-1}R=\qf(R)[X]$, absurd. If $p\not=(0)$, then $P=p[X]$ with $p$ a $t$-ideal of $R$ (Cf. \cite{Q}). Hence $p[X]=((p[X])^{2})_{t}=(p^{2}[X])_{t}=(p^{2})_{t}[X]$, whence $p=(p^{2})_{t}$, absurd as desired. Sufficiency is straightforward.
\end{proof}

%%%%%%%%%%%%%%%%%%%%%%%%%%%%%%%%%%%%%%%%%%%%%%%%%%%%%%%%%%%%%%%%%%%%%%%%%%%%%%%%%%%%%%%
\begin{proof}[Proof of Example~\ref{sec:2.10}]
Clearly, $R$ is an $n$-dimensional GCD domain (which is not B\'ezout). Moreover, $R$ has finite $t$-character by \cite[Proposition 4.2]{KM2}. Finally, the strongly $t$-discrete condition is ensured by Lemma~\ref{sec:2.11}.
\end{proof}

%%%%%%%%%%%%%%%%%%%%%%%%%%%%%%%%%%%%%%%%%%%%%%%%%%%%%%%%%%%%%%%%%%%%%%%%%%%%%%%%%%%%%%%%%%%%%%%%%%%%%%%%%%%%%%%%%%%%%%%%%%%%%%%%%%%%%%%%%%%%%%
%%%%%%%%%%%%%%%%%%%%%%%%%%%%%%%%%%%%%%%%%%%%%%%%%%%%%%%%%%%%%%%%%%%%%%%%%%%%%%%%%%%%%%%%%%%%%%%%%%%%%%%%%%%%%%%%%%%%%%%%%%%%%%%%%%%%%%%%%%%%%%
%%%%%%%%%%%%%%%%%%%%%%%%%%%%%%%%%%%%%%%%%%%%%%%%%%%%%%%%%%%%%%%%%%%%%%%%%%%%%%%%%%%%%%%%%%%%%%%%%%%%%%%%%%%%%%%%%%%%%%%%%%%%%%%%%%%%%%%%%%%%%%
%%%%%%%%%%%%%%%%%%%%%%%%%%%%%%%%%%%%%%%%%%%%%%%%%%%%%%%%%%%%%%%%%%%%%%%%%%%%%%%%%%%%%%%%%%%%%%%%%%%%%%%%%%%%%%%%%%%%%%%%%%%%%%%%%%%%%%%%%%%%%%

%%%%%%%%%%%%%%%%%%%%%%%%%%%%%%%%%%%%%%%%%%%%%%%%%%%%%%%%%%%%%%%%%%%%%%%%%%%%%%%%%%%%%%%%%%%%%%%%%%%%%%%%%%%%%%%%%%%%%%%%%%%%%%%%%%%%%%%%%%%%%%
%%%%%%%%%%%%%%%%%%%%%%%%%%%%%%%%%%%%%%%%%%%%%%%%%%%%%%%%%%%%%%%%%%%%%%%%%%%%%%%%%%%%%%%%%%%%%%%%%%%%%%%%%%%%%%%%%%%%%%%%%%%%%%%%%%%%%%%%%%%%%%
%%%%%%%%%%%%%%%%%%%%%%%%%%%%%%%%%%%%%%%%%%%%%%%%%%%%%%%%%%%%%%%%%%%%%%%%%%%%%%%%%%%%%%%%%%%%%%%%%%%%%%%%%%%%%%%%%%%%%%%%%%%%%%%%%%%%%%%%%%%%%%
%%%%%%%%%%%%%%%%%%%%%%%%%%%%%%%%%%%%%%%%%%%%%%%%%%%%%%%%%%%%%%%%%%%%%%%%%%%%%%%%%%%%%%%%%%%%%%%%%%%%%%%%%%%%%%%%%%%%%%%%%%%%%%%%%%%%%%%%%%%%%%

\begin{thebibliography}{99}

\par\bibitem{Ad}   D. D. Anderson,  Star operations induced by overrings, Comm. Algebra  \textbf{16} (12)  (1988), 2535--2553.
\par\bibitem{Ad2}   D. D. Anderson, GCD domains, Gauss' lemma, and contents of polynomials. Non-Noetherian Commutative Ring Theory, pp. 1--31, Math. Appl., 520, Kluwer Acad. Publ., Dordrecht, 2000.
\par\bibitem{AAFZ}   D. D. Anderson, D. F. Anderson, M. Fontana and M. Zafrullah, On $v$-domains and star operations, Comm. Algebra \textbf{37} (2009), 3018--3043.
\par\bibitem{AHP}   {D. D. Anderson, J. A. Huckaba  and  I. J. Papick},  A note on stable domains, Houston J. Math. {\bf 13} (1) (1987), 13--17.
\par\bibitem{A}     D. F. Anderson,  The Picard group of a monoid domain, J. Algebra  \textbf{115}  (1988), 342--351.
\par\bibitem{AHZ}   {D. F. Anderson, E. Houston  and  M. Zafrullah}, Pseudo-integrality, Canad. Math. Bull. {\bf 34} (1) (1991), 15--22.
\par\bibitem{BM}    H. Bass and M. P. Murthy,  Grothendieck groups and Picard groups of abelian group rings, Ann. of Math. (2)  \textbf{86}  (1967), 16--73.
\par\bibitem{Ba1}   {S. Bazzoni}, Class semigroup of Pr\"ufer domains, J. Algebra {\bf 184} (1996), 613--631.
\par\bibitem{Ba4}   {S. Bazzoni}, Clifford regular domains, J. Algebra {\bf 238} (2001), 703--722.
\par\bibitem{BaKa}   S. Bazzoni and S. Kabbaj, Class groups and class semigroups of integral domains. Commutative Algebra. Noetherian and non-Noetherian perspectives, pp. 47--76, Springer, New York,  2011.
\par\bibitem{BS}   { S. Bazzoni  and  L. Salce}, Groups in the class semigroups of valuation domains, Israel J. Math.  {\bf 95} (1996), 135--155.
\par\bibitem{Bou}   N. Bourbaki, Commutative Algebra, Springer-Verlag, Berlin, 1989.
\par\bibitem{B}     A. Bouvier,  Le groupe des classes d'un anneau int\`egre, in ``107-\`eme Congr\'es des Soci\'et\'es Savantes," Brest,  \textbf{4}  (1982), 85--92.
\par\bibitem{BZ}    A. Bouvier and M. Zafrullah,  On some class groups of an integral domain, Bull. Soc. Math. Gr\`ece  \textbf{29}  (1988), 45--59.
\par\bibitem{DHLZ}  { D. E. Dobbs, E. Houston, T. Lucas  and  M. Zafrullah}, $t$-Linked overrings and Pr\"ufer $v$-multiplication domains, Comm. Algebra {\bf 17} (11) (1989), 2835--2852.
\par\bibitem{ES}   P. Eakin and A. Sathaye, Prestable ideals, J. Algebra {\bf 41} (1976), 439--454.
\par\bibitem{E}   { S. El Baghdadi}, On a class of Pr\"ufer $v$-multiplication domains, Comm. Algebra {\bf 30} (2002), 3723--3742.
\par\bibitem{FHP}   M. Fontana, J. A. Huckaba  and  I. J. Papick, Pr\"ufer Domains, Monographs and Textbooks in Pure and Applied Mathematics, 203. Marcel Dekker, Inc., New York, 1997.
\par\bibitem{FZ}   M. Fontana and M. Zafrullah, On $v$-domains: a survey. Commutative Algebra. Noetherian and
non-Noetherian perspectives, pp. 145--179, Springer, New York, 2011.
\par\bibitem{Gi}   { R. Gilmer}, Multiplicative Ideal Theory, Marcel Dekker, New York, 1972.
\par\bibitem{G}     S. Greco,  Seminormality and quasinormality of group rings, J. Pure Appl. Algebra  \textbf{18} (1980), 129--142.
\par\bibitem{Gr1}   {M. Griffin}, Some results on $v$-multiplication rings, Canad. J. Math. {\bf 19} (1967), 710--722.
\par\bibitem{Gr2}   {M. Griffin}, Rings of Krull type, J. Reine Angew. Math. {\bf 229} (1968), 1--27.
\par\bibitem{HK1}   F. Halter-Koch, Ideal Systems. An introduction to Multiplicative Ideal Theory, Marcel Dekker, New York, 1998.
\par\bibitem{HK2}   F. Halter-Koch,  Clifford semigroups of ideals in monoids and domains, Forum Math. {\bf 21} (2009), 1001--1020.
\par\bibitem{Ho}   { J. M. Howie}, Fundamentals of Semigroup Theory, Oxford University Press, Oxford, 1995.
\par\bibitem{HP}   { J. A. Huckaba  and  I. J. Papick}, When the dual of an ideal is a ring, Manuscripta Math. {\bf 37} (1982), 67--85.
\par\bibitem{KM}   { S. Kabbaj  and  A. Mimouni}, Class semigroups of integral domains, J. Algebra {\bf 264} (2003), 620--640.
\par\bibitem{KM1}   { S. Kabbaj  and  A. Mimouni}, Corrigendum to ``Class semigroups of integral domains," J. Algebra  {\bf 320} (2008), 1769--1770.
\par\bibitem{KM2}   { S. Kabbaj  and  A. Mimouni}, $t$-Class semigroups of integral domains, J. Reine Angew. Math. {\bf 612} (2007), 213--229.
\par\bibitem{KM3}   { S. Kabbaj  and  A. Mimouni}, Constituent groups of Clifford semigroups arising from $t$-closure, J. Algebra {\bf 321}  (2009), 1443--1452.
\par\bibitem{KM4}   { S. Kabbaj  and  A. Mimouni}, $t$-Class semigroups of Noetherian domains, in ``Commutative Algebra and its Applications," pp. 283--290, Walter de Gruyter, Berlin, 2009.
\par\bibitem{Kg}   { B. G. Kang}, $*-$Operations in integral domains, Ph.D. thesis, The University of Iowa, Iowa City, 1987.
\par\bibitem{KP}   { D. J. Kwak  and  Y. S. Park}, On $t$-flat overrings, Chinese J. Math. {\bf 23} (1) (1995), 17--24.
\par\bibitem{Lip}   {J. Lipman}, Stable ideals and Arf rings, Amer. J. Math. {\bf 93} (1971), 649--685.
\par\bibitem{O1}   { B. Olberding}, Globalizing local properties of Pr\"ufer domains, J. Algebra {\bf 205} (1998), 480--504.
\par\bibitem{O2}   { B. Olberding}, On the classification of stable domains, J. Algebra  {\bf 243} (2001), 177--197.
\par\bibitem{O3}   { B. Olberding}, On the structure of stable domains, Comm. algebra {\bf 30} (2) (2002), 877--895.
\par\bibitem{Q}   {  J. Querre}, Ideaux divisoriels d'un anneau de polyn\^omes, J. Algebra {\bf 64} (1980), 270--284.
\par\bibitem{SV}   { J. D. Sally  and  W. V. Vasconcelos}, Stable rings and a problem of Bass, Bull. Amer. Math. Soc. {\bf 79} (1973), 574--576.
\par\bibitem{ZZ}   { P. Zanardo  and  U. Zannier}, The class semigroup of orders in number fields, Math. Proc. Cambridge Phil. Soc.  {\bf 115} (1994), 379--391.
\end{thebibliography}
\end{document}